\newtheorem{theorem}{Theorem}
\newtheorem{lemma}[theorem]{Lemma}
\newtheorem{corollary}[theorem]{Corollary}
\begin{document}
\title{Proper isometric actions}

\author{Jos\'{e} Carlos D\'\i{}az-Ramos}

\thanks{The author has been supported by a Marie-Curie
Intra-European Fellowship (MEIF-CT-2006-038754) and by project
PGIDIT06PXIB207054PR (Spain).}

\maketitle


In this short note we use a theorem by Helgason to give an easy
proof of two results on proper isometric actions
(theorems~\ref{thProperClosed} and \ref{thClosedOrbits}).

Throughout this paper we assume that $M$ is a connected and complete
Riemannian manifold. We denote by $I(M)$ the full isometry group of
$M$. It is a well known fact that $I(M)$ is a Lie group \cite{MS39}.

Let $G$ be a Lie group. An action $\varphi:G\times M\to M$ is said
to be an \emph{isometric action} if for each $g\in G$ the map
$\varphi_g:M\to M$, $p\mapsto g(p)$, is an isometry of $M$. It is
customary to restrict to effective actions so that $G$ can be
considered a subgroup of $I(M)$ (not necessarily closed).

Our basic tool is the following theorem, which was proved in
\cite[p. 167]{H62} (see also \cite{Y77}).

\begin{theorem}\label{thHelgason}
Let $M$ be a complete, connected Riemannian manifold and let
$\{g_n\}\subset I(M)$ be a sequence such that $\{g_n(p)\}$ is
bounded in $M$ for some $p\in M$. Then, there exists an isometry
$g\in I(M)$ and a subsequence $\{g_{n_k}\}$ of $\{g_n\}$ such that
$g_{n_k}\to g$.
\end{theorem}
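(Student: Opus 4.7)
\bigskip

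\noindent\textbf{Proof proposal.} The plan is to combine the Hopf--Rinow theorem with an Arzel\`{a}--Ascoli argument, using only that isometries are distance-preserving. The existence of the limit as a distance-preserving map is the easy part; proving it is a \emph{bijective} isometry will be the main point.

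First I would use that for any $x\in M$ the sequence $\{g_n(x)\}$ is bounded: indeed, since each $g_n$ preserves distances,
\[
d(g_n(x),p)\le d(g_n(x),g_n(p))+d(g_n(p),p)=d(x,p)+d(g_n(p),p),
\]
and the right-hand side is bounded by hypothesis. Completeness together with the Hopf--Rinow theorem gives that closed bounded subsets of $M$ are compact, so $\{g_n(x)\}$ has compact closure for every $x\in M$; in particular the family $\{g_n\}$ is pointwise relatively compact. The family is also equicontinuous, in fact uniformly $1$-Lipschitz, because each $g_n$ is an isometry. Since $M$ is separable (it is a connected manifold), I can pick a countable dense subset $\{x_i\}\subset M$, apply a diagonal extraction to obtain a subsequence $\{g_{n_k}\}$ for which $g_{n_k}(x_i)$ converges for every $i$, and then conclude by the standard equicontinuity/density argument that $\{g_{n_k}\}$ converges uniformly on every compact subset of $M$ to a map $g\colon M\to M$. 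Passing to the limit in $d(g_{n_k}(x),g_{n_k}(y))=d(x,y)$ shows that $g$ is distance-preserving, hence continuous and injective.

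The main obstacle is surjectivity of $g$. Here I would apply the same argument to the inverse sequence $\{g_n^{-1}\}$, which also has a bounded orbit at $p$ because $d(g_n^{-1}(p),p)=d(p,g_n(p))$. After passing to a further subsequence (still denoted $\{g_{n_k}\}$) I obtain a distance-preserving limit $h\colon M\to M$ of $\{g_{n_k}^{-1}\}$. Using that convergence is uniform on compacts and that the $g_{n_k}$ are equicontinuous, I can take limits in $g_{n_k}\circ g_{n_k}^{-1}=\mathrm{id}=g_{n_k}^{-1}\circ g_{n_k}$ to conclude $g\circ h=h\circ g=\mathrm{id}$. Thus $g$ is a distance-preserving bijection of $M$.

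Finally, by the Myers--Steenrod theorem a distance-preserving bijection of a connected Riemannian manifold is a smooth isometry, so $g\in I(M)$. It remains to observe that $g_{n_k}\to g$ in $I(M)$; this is automatic since the Lie group topology on $I(M)$ coincides with the compact-open topology, which is precisely the mode of convergence produced by the Arzel\`{a}--Ascoli step above.
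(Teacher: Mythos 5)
Your proposal is correct, but note that the paper does not actually prove this statement: Theorem~\ref{thHelgason} is imported wholesale from Helgason's book \cite{H62} (with \cite{Y77} as an alternative reference), so there is no in-paper argument to compare against. What you have written is essentially the standard modern proof, close in spirit to Yau's: boundedness of every orbit via the triangle inequality, Hopf--Rinow to get pointwise relative compactness, the uniform $1$-Lipschitz bound for equicontinuity, separability of $M$ (which holds because a connected Riemannian manifold is a connected, locally compact metric space, hence $\sigma$-compact and separable), and a diagonal/Arzel\`a--Ascoli extraction giving a distance-preserving limit $g$. You correctly identify surjectivity as the real issue and resolve it the right way, by extracting a simultaneous limit $h$ of $\{g_{n_k}^{-1}\}$ (whose orbit of $p$ is bounded since $d(g_n^{-1}(p),p)=d(p,g_n(p))$) and passing to the limit in $g_{n_k}\circ g_{n_k}^{-1}=\mathrm{id}$; the one step that deserves an explicit line is the estimate $d\bigl(g_{n_k}(g_{n_k}^{-1}(x)),g(h(x))\bigr)\le d\bigl(g_{n_k}^{-1}(x),h(x)\bigr)+d\bigl(g_{n_k}(h(x)),g(h(x))\bigr)$, which is exactly where the $1$-Lipschitz property is used, and which you gesture at correctly. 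The appeal to Myers--Steenrod (the paper's reference \cite{MS39}) to upgrade the distance-preserving bijection to a smooth isometry, and the identification of the Lie group topology on $I(M)$ with the compact-open topology to interpret $g_{n_k}\to g$, are both standard and complete the argument. In short: the proof is sound and self-contained, and it supplies a proof the paper chose to cite rather than give.
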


An action of a Lie group $G$ on a manifold $M$ is called
\emph{proper} \cite{P61} if one (and hence all) of the following
conditions is satisfied:
\begin{itemize}
\item[(i)] The map $G\times M\to M\times M$, $(g,p)\mapsto (g(p),p)$
is proper.

\item[(ii)] Given compact subsets $K$ and $L$ of $M$, the set
$\{g\in G:g(K)\cap L\neq\emptyset\}$ is compact.

\item[(iii)] For any two points $p,q\in M$, there exist
open neighborhoods $U_p$ and $V_q$ of $p$ and $q$ respectively such
that $\{g\in G:g(U_p)\cap V_q\neq\emptyset\}$ is relatively compact.

\item[(iv)] For any sequences $\{g_n\}\subset G$ and
$\{p_n\}\subset M$, if $g_n(p_n)\to q$ and $p_n\to p$ then
$\{g_n\}$ has a convergent subsequence.
\end{itemize}

For isometric actions condition (iv) can be simplified:

\begin{lemma}
Let $G\subset I(M)$. Then $G$ acts properly on $M$ if and only if
for any sequence $\{g_n\}\subset G$ such that $\{g_n(p)\}$ is
bounded for some $p\in M$, there exists a convergent subsequence of
$\{g_n\}$ in $G$.
\end{lemma}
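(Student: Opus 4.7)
The plan is to prove the two implications separately, using condition (iv) from the definition of properness as the intermediary.

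For the forward direction, I would assume $G$ acts properly and take a sequence $\{g_n\} \subset G$ with $\{g_n(p)\}$ bounded. Since $M$ is complete and connected, the Hopf--Rinow theorem guarantees that closed bounded subsets of $M$ are compact, so after passing to a subsequence I may assume $g_n(p) \to q$ for some $q \in M$. Applying condition (iv) with the constant sequence $p_n = p$ yields a convergent subsequence of $\{g_n\}$ in $G$, as required.

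For the converse, I would verify condition (iv). Given $\{g_n\} \subset G$ and $\{p_n\} \subset M$ with $g_n(p_n) \to q$ and $p_n \to p$, the key observation is that each $g_n$ is an isometry, so the triangle inequality gives
\[
d(g_n(p), q) \;\leq\; d(g_n(p), g_n(p_n)) + d(g_n(p_n), q) \;=\; d(p, p_n) + d(g_n(p_n), q),
\]
and both terms on the right tend to zero. Hence $g_n(p) \to q$, and in particular $\{g_n(p)\}$ is bounded. The hypothesis of the lemma then produces a convergent subsequence of $\{g_n\}$ in $G$.

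There is no serious obstacle: the forward direction hinges on the fact that completeness of $M$ allows one to upgrade boundedness of $\{g_n(p)\}$ to the existence of a convergent subsequence, and the reverse direction is just the observation that an isometry moves the reference point $p$ close to the limit $q$ whenever it moves a nearby point $p_n$ close to $q$. The only point worth flagging is that the statement quietly uses the standing completeness hypothesis on $M$ in the forward implication.
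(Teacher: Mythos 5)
Your proposal is correct and follows essentially the same route as the paper: the paper dismisses the forward direction as obvious since bounded sequences in a complete manifold have convergent subsequences (you merely spell out the Hopf--Rinow step and the use of condition (iv) with the constant sequence), and your converse is the paper's triangle-inequality argument verbatim.
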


\begin{proof}
The ``if'' part is obvious because any bounded sequence of points of
$M$ has a convergent subsequence. Conversely, let $\{g_n\}\subset G$
and $\{p_n\}\subset M$ be sequences such that $g_n(p_n)\to q$ and
$p_n\to p$. Since $0\leq d(g_n(p),q)\leq
d(g_n(p),g_n(p_n))+d(g_n(p_n),q)=d(p,p_n)+d(g_n(p_n),q)$ we have
$g_n(p)\to  q$. In particular, $\{g_n(p)\}$ is bounded. By
hypothesis there exists a subsequence $\{g_{n_k}\}$ of $\{g_n\}$
such that $g_{n_k}\to g\in G$. Hence $G$ acts properly on $M$.
\end{proof}

\begin{corollary}\label{thProper}
A subgroup $G\subset I(M)$ acts properly on $M$ if and only if for
any sequence $\{g_n\}\subset G$ and any $p\in M$, $g_n(p)\to q$
implies that $\{g_n\}$ has a convergent subsequence in $G$.
\end{corollary}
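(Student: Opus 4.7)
The plan is to derive Corollary~\ref{thProper} directly from the preceding lemma, using Theorem~\ref{thHelgason} to upgrade a boundedness hypothesis to a convergence hypothesis.

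For the ``only if'' direction, I would argue trivially: suppose $G$ acts properly and $g_n(p)\to q$ for some $p,q\in M$. Then in particular $\{g_n(p)\}$ is bounded, so the preceding lemma immediately produces a convergent subsequence of $\{g_n\}$ in $G$.

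The ``if'' direction is the content of the corollary and is where the preceding lemma, combined with Theorem~\ref{thHelgason}, does the work. Assume the stated condition and, to apply the lemma, take any sequence $\{g_n\}\subset G$ such that $\{g_n(p)\}$ is bounded in $M$ for some $p$. Since $\{g_n\}\subset I(M)$, Theorem~\ref{thHelgason} yields a subsequence $\{g_{n_k}\}$ and an isometry $g\in I(M)$ with $g_{n_k}\to g$ in $I(M)$; in particular $g_{n_k}(p)\to g(p)$ in $M$. Now I would apply the hypothesis of the corollary to the subsequence $\{g_{n_k}\}\subset G$ (with limit point $q:=g(p)$) to extract a further subsequence converging in $G$. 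This is a convergent subsequence of the original $\{g_n\}$ in $G$, and by the preceding lemma $G$ acts properly.

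The only mildly delicate point, and the one I would make sure to state cleanly, is that Theorem~\ref{thHelgason} only provides convergence to an element of $I(M)$, not of $G$; the bootstrap consists in using the assumed convergence-implies-subsequence property to promote this limit in $I(M)$ to a subsequential limit inside $G$. No further obstacle arises.
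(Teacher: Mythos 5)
Your proof is correct. Both directions reduce the corollary to the preceding lemma, which is exactly what the paper intends (it gives no separate proof), and your ``only if'' direction is the same trivial observation that convergent implies bounded. The one place you diverge is in the ``if'' direction: to pass from ``$\{g_n(p)\}$ bounded'' to ``some subsequence satisfies $g_{n_k}(p)\to q$'' you invoke Theorem~\ref{thHelgason}, whereas all that is needed is the elementary fact---already used verbatim in the proof of the lemma---that a bounded sequence of points in a complete Riemannian manifold has a convergent subsequence (Hopf--Rinow). Your route is valid: Helgason's theorem does apply under the boundedness hypothesis, it gives $g_{n_k}\to g$ in $I(M)$ and hence $g_{n_k}(p)\to g(p)$, and you correctly note that the limit lies only in $I(M)$ so the corollary's hypothesis must still be applied to land the subsequence inside $G$. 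But it is heavier machinery than necessary, and it slightly obscures the logical architecture of the paper, where Theorem~\ref{thHelgason} is deliberately reserved as the nontrivial input for Theorems~\ref{thProperClosed} and~\ref{thClosedOrbits}, while the lemma and corollary are meant to be purely formal reformulations of properness.
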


It is well known that a closed subgroup of the isometry group of a
Riemannian manifold acts properly on that manifold. The converse
fact seems to be a folklore theorem, but the author has not found a
proof in the literature. We give a simple proof using
Theorem~\ref{thHelgason}.

\begin{theorem}\label{thProperClosed}
Let $G\subset I(M)$ be a subgroup. Then, $G$ acts properly on $M$ if
and only if $G$ is closed in $I(M)$.
\end{theorem}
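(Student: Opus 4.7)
The plan is to handle the two directions separately, using Corollary~\ref{thProper} as the working characterization of properness and Theorem~\ref{thHelgason} as the extraction tool for convergent subsequences in $I(M)$.

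For the direction \emph{proper implies closed}, I would pick an arbitrary $g$ in the closure of $G$ inside $I(M)$ and choose a sequence $\{g_n\}\subset G$ with $g_n\to g$ in $I(M)$. Fixing any $p\in M$, continuity of the evaluation map gives $g_n(p)\to g(p)$, so by Corollary~\ref{thProper} some subsequence $\{g_{n_k}\}$ converges in $G$ to an element $h\in G$. But $g_{n_k}\to g$ already in $I(M)$, and limits in the Hausdorff topology of $I(M)$ are unique, so $g=h\in G$. This shows $\overline{G}\subseteq G$, i.e.\ $G$ is closed in $I(M)$.

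For the direction \emph{closed implies proper}, I would verify the condition in Corollary~\ref{thProper}. Let $\{g_n\}\subset G$ and $p\in M$ satisfy $g_n(p)\to q$. Then $\{g_n(p)\}$ is bounded in $M$, so Theorem~\ref{thHelgason} produces a subsequence $\{g_{n_k}\}$ and an isometry $g\in I(M)$ with $g_{n_k}\to g$. Since $G$ is closed in $I(M)$ and $g_{n_k}\in G$, the limit $g$ lies in $G$. Thus $\{g_n\}$ has a subsequence convergent in $G$, and Corollary~\ref{thProper} concludes that the action is proper.

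Neither direction hides any real difficulty once Helgason's theorem is available; the only step that is not completely formal is the use of Theorem~\ref{thHelgason}, which replaces the usual \emph{ad hoc} argument that a closed subgroup of $I(M)$ acts properly. The main point to be careful about is simply the uniqueness of limits in $I(M)$ in the first half, and the verification that the bounded-orbit hypothesis of Helgason's theorem is met in the second half; both are immediate from convergence of $g_n(p)$.
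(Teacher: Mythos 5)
Your proof is correct and follows essentially the same route as the paper: both directions are reduced to Corollary~\ref{thProper}, with uniqueness of limits in $I(M)$ handling the forward implication and Theorem~\ref{thHelgason} supplying the convergent subsequence for the converse. The only cosmetic difference is that you start from a point of $\overline{G}$ and extract a sequence, while the paper starts directly from a convergent sequence in $G$; the argument is the same.
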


\begin{proof}
First assume that $G$ acts properly on $M$. Let $\{g_n\}$ be a
sequence with $g_n\to g\in I(M)$. Let $p\in M$. Then, $g_n(p)\to
g(p)$, and so by Corollary \ref{thProper} there exists a subsequence
$\{g_{n_k}\}$ such that $g_{n_k}\to h\in G$. The uniqueness of limit
implies $g=h\in G$, so $G$ is closed.

Conversely, assume that $G$ is closed. Let $\{g_n\}\subset G$ be a
sequence with $g_n(p)\to q$. By Theorem \ref{thHelgason}, there
exists a subsequence $\{g_{n_k}\}$ such that $g_{n_k}\to g\in I(M)$.
Since $G$ is closed it follows that $g\in G$ and thus $G$ acts
properly by Corollary \ref{thProper}.
\end{proof}

The following result states that, up to orbit equivalence, isometric
actions with closed orbits correspond to proper isometric actions.

\begin{theorem}\label{thClosedOrbits}
Let $G\subset I(M)$ be a subgroup. Then, the orbits of $G$ are
closed if and only if the action of $G$ is orbit equivalent to the
action of the closure of $G$ in $I(M)$.
\end{theorem}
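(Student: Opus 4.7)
The plan is to prove both implications by taking the closure $\bar{G}$ of $G$ in $I(M)$, and using Theorem \ref{thProperClosed} to conclude that $\bar{G}$ acts properly on $M$. Once this is established, orbit equivalence of the two actions is equivalent to $G\cdot p=\bar{G}\cdot p$ for every $p\in M$, and each side of the theorem will translate into one inclusion.

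For the ``if'' direction, suppose the action of $G$ is orbit equivalent to that of $\bar{G}$. I would show that every $\bar{G}$-orbit, and hence every $G$-orbit, is closed. So fix $p\in M$ and $q\in\overline{\bar{G}\cdot p}$, and take a sequence $\{h_n\}\subset\bar{G}$ with $h_n(p)\to q$. Since $\bar{G}$ is closed in $I(M)$, Theorem \ref{thProperClosed} ensures that $\bar{G}$ acts properly, so Corollary \ref{thProper} yields a subsequence $h_{n_k}\to h\in\bar{G}$. Then $q=h(p)\in\bar{G}\cdot p$, so $\bar{G}\cdot p$ is closed; by orbit equivalence, $G\cdot p=\bar{G}\cdot p$ is closed as well.

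For the ``only if'' direction, assume the orbits of $G$ are closed, and fix $p\in M$. The inclusion $G\cdot p\subset\bar{G}\cdot p$ is automatic. For the reverse, take $h\in\bar{G}$ and write $h=\lim g_n$ with $g_n\in G$; since evaluation at $p$ is continuous on $I(M)$, we obtain $g_n(p)\to h(p)$. As $g_n(p)\in G\cdot p$ and $G\cdot p$ is closed by hypothesis, $h(p)\in G\cdot p$, giving $\bar{G}\cdot p\subset G\cdot p$. Hence the orbits of $G$ and $\bar{G}$ coincide, which is exactly orbit equivalence.

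There is no real obstacle here; the only delicate point is making sure Corollary \ref{thProper} is applied to a group known to act properly, which is precisely why $\bar{G}$ (rather than $G$ itself) is the object to work with, the properness being delivered for free by Theorem \ref{thProperClosed}. The continuity of the evaluation map $I(M)\to M$, $g\mapsto g(p)$, is a standard property of the Lie group topology on $I(M)$ and needs no further comment.
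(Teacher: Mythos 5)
Your proof is correct and takes essentially the same route as the paper: the ``only if'' direction is identical, and in the ``if'' direction you merely reach the convergent subsequence by applying Theorem~\ref{thProperClosed} and Corollary~\ref{thProper} to the closed subgroup $\bar{G}$, whereas the paper invokes Theorem~\ref{thHelgason} directly on the sequence in $G$ and then uses orbit equivalence --- the same underlying compactness fact either way. There are no gaps.
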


\begin{proof}
Let $\bar{G}$ denote the closure of $G$ in $I(M)$.

Assume that the orbits of $G$ are closed. Let $p\in M$. Obviously,
$G\cdot p\subset \bar{G}\cdot p$. Let $q\in\bar{G}\cdot p$ and write
$q=g(p)$ with $g\in\bar{G}$. Take $\{g_n\}\subset G$ such that
$g_n\to g$. Then, $g_n(p)\to g(p)\in G\cdot p$ because $G\cdot p$ is
closed. By uniqueness of limit we have $q=g(p)\in G\cdot p$.

Conversely, assume that the action of $G$ is orbit equivalent to the
action of $\bar{G}$. Let $p\in M$ and take $\{g_n\}\subset G$ with
$g_n(p)\to q\in M$. By Theorem \ref{thHelgason} there is a
subsequence $\{g_{n_k}\}$ such that $g_{n_k}\to g\in\bar{G}$. Hence
$g_{n_k}(p)\to g(p)$. By uniqueness of limit $q=g(p)\in\bar{G}\cdot
p=G\cdot p$ so $G\cdot p$ is closed.
\end{proof}

\begin{corollary}
The orbits of an isometric action are closed if and only if the
action is orbit equivalent to a proper isometric action.
\end{corollary}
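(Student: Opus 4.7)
The plan is to derive this corollary as a direct combination of Theorem~\ref{thProperClosed} (proper $\Leftrightarrow$ closed in $I(M)$) and Theorem~\ref{thClosedOrbits} (closed orbits $\Leftrightarrow$ orbit equivalent to the $\bar{G}$-action). The slogan ``closed orbits $=$ orbit equivalent to a proper action'' should essentially read off by chaining the two biconditionals, with only one small auxiliary observation needed on the converse side.

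For the forward direction, suppose $G \subset I(M)$ has closed orbits. By Theorem~\ref{thClosedOrbits}, the action of $G$ is orbit equivalent to the action of $\bar{G}$, the closure of $G$ in $I(M)$. Since $\bar{G}$ is closed in $I(M)$, Theorem~\ref{thProperClosed} says that $\bar{G}$ acts properly on $M$. Thus $G$ is orbit equivalent to the proper isometric action of $\bar{G}$, which is what we wanted.

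For the converse, assume $G$ is orbit equivalent to a proper isometric action of some subgroup $H \subset I(M)$; I would then want to argue that the orbits of such an $H$ are automatically closed, whence so are the orbits of $G$. The key auxiliary fact is that every proper isometric action has closed orbits: if $q \in \overline{H \cdot p}$, pick $\{h_n\} \subset H$ with $h_n(p) \to q$; by Corollary~\ref{thProper} applied to the proper action of $H$, some subsequence $h_{n_k}$ converges to $h \in H$, and by continuity $q = h(p) \in H \cdot p$. Since orbit equivalence means $G \cdot p = H \cdot p$ for every $p \in M$, the orbits of $G$ are closed.

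The only conceivable snag is the small lemma that proper actions have closed orbits, but it is essentially immediate from Corollary~\ref{thProper}; there is no real obstacle, and the corollary is a genuine one-line consequence of the two main theorems once this observation is noted.
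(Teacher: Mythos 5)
Your proposal is correct and is exactly the intended derivation: the paper leaves this corollary unproved precisely because it is the chaining of Theorem~\ref{thClosedOrbits} with Theorem~\ref{thProperClosed} that you describe. Your auxiliary observation for the converse --- that a proper isometric action has closed orbits, via Corollary~\ref{thProper} --- is the one genuinely needed extra step (since the given proper group $H$ need not be the closure of $G$), and your argument for it is sound.
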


\noindent\textbf{Acknowledgement}: The author would like to thank
Prof. J. Szenthe for his useful comments and observations regarding
the contents of this paper.


\noindent{\sc J. C. D\'{\i}az-Ramos}: Department of Mathematics,
University College Cork, Ireland.\\
E-mail: jc.diazramos@ucc.ie

\end{document}